\declaretheorem[style=theorem,name={Theorem}]{theoremletter}
\theoremstyle{plain}
\newtheorem{theorem}{Theorem}[section]
\newtheorem*{thm*}{Theorem}
\newtheorem{prop}[theorem]{Proposition}
\newtheorem{lemma}[theorem]{Lemma}
\theoremstyle{definition}
\newtheorem{dfn}[theorem]{Definition}
\theoremstyle{remark} 
\newtheorem{remark}[theorem]{Remark}
\theoremstyle{plain}
\numberwithin{equation}{section}
\newcommand{\alpheqn}[1][\relax]{
     \refstepcounter{equation}
     \if#1\relax \relax
       \else \label{#1}
     \fi  
     \setcounter{saveeqn}{\value{equation}}%
    \setcounter{equation}{0}%
    \renewcommand{\theequation}{\thealphequation}}
\newcommand{\reseteqn}{\setcounter{equation}{\value{saveeqn}}%
     \renewcommand{\theequation}{\thearabicequation}}
\providecommand{\mathscr}{\mathcal} 
\newcommand{\sa}{{\operatorname{sa}}}
\newcommand{\RR}{{\mathbb{R}}}
\newcommand{\vertiii}[1]{{\left\vert\kern-0.25ex\left\vert\kern-0.25ex\left\vert #1 
    \right\vert\kern-0.25ex\right\vert\kern-0.25ex\right\vert}}
\newcommand{\Bvert}[1]{{\Big\vert\kern-0.25ex\Big\vert\kern-0.25ex\Big\vert #1 
    \Big\vert\kern-0.25ex\Big\vert\kern-0.25ex\Big\vert}}
\newcommand{\bvert}[1]{{\big\vert\kern-0.25ex\big\vert\kern-0.25ex\big\vert #1 
    \big\vert\kern-0.25ex\big\vert\kern-0.25ex\big\vert}}
\newcommand{\nvert}[1]{{\vert\kern-0.25ex\vert\kern-0.25ex\vert #1 
    \vert\kern-0.25ex\vert\kern-0.25ex\vert}}
\renewcommand{\leq}{\leqslant}
\renewcommand{\geq}{\geqslant}
\newcommand{\cd}{\cdot}
\newcommand{\nn}{\mathbb{N}}
\newcommand{\rr}{\mathbb{R}}
\newcommand{\cc}{\mathbb{C}}
\newcommand{\al}{\alpha}
\newcommand{\de}{\delta}
\newcommand{\ep}{\varepsilon}
\newcommand{\C}[1]{\mathcal{#1}}
\newcommand{\T}[1]{\textup{#1}}
\newcommand{\B}[1]{\mathbb{#1}}
\newcommand{\q}{\qquad}
\newcommand{\dist}{{\operatorname{dist}}}
\newcommand{\dom}{{\operatorname{dom}}}
\newcommand{\distqgh}{{\operatorname{dist}^{\operatorname{Q}}_{\operatorname{GH}}}}
\newcommand{\distgh}{{\operatorname{dist}_{\operatorname{GH}}}}
\newcommand{\disth}{{\operatorname{dist}_{\operatorname{H}}}}
\newcommand{\sgn}{{\operatorname{sgn}}}
\begin{document}

\author{Jens Kaad}

\address{Jens Kaad, Department of Mathematics and Computer Science, University of Southern Denmark, Campusvej 55, DK-5230 Odense M, Denmark}
\email{kaad@imada.sdu.dk}

\author{David Kyed}
\address{David Kyed, Department of Mathematics and Computer Science, University of Southern Denmark, Campusvej 55, DK-5230 Odense M, Denmark}
\email{dkyed@imada.sdu.dk}

\subjclass[2020]{58B34, 46L89, 46L30} 

\keywords{Quantum metric spaces, Gromov-Hausdorff distance.}

\title{A comparison of two quantum distances}

\begin{abstract}
We show that Rieffel's quantum Gromov-Hausdorff distance between two compact quantum metric spaces is not equivalent to the ordinary Gromov-Hausdorff distance applied to the associated state spaces.
\end{abstract}

\maketitle

\section{Introduction}
Rieffel's theory of compact quantum metric spaces provides an elegant non-commutative extension of the theory of classical compact metric spaces, and part of its success is due to the fact that it allows for a natural analogue of the Gromov-Hausdorff distance \cite{gromov-groups-of-polynomial-growth-and-expanding-maps, edwards-GH-paper, Evans:Prob}. In fact, quite a few such notions have been proposed \cite{Rie:GHD, Ker:MQG, Li:CQG, Li:GH-dist, Lat:QGH, Lat:DGH, Lat:GPS, Lat:MGP, Kerr-Li}, each having certain advantages and disadvantages. The relationship between the different versions is generally well understood, but
quite recently Connes and van Suijlekom initiated a programme to re-cast the theory of non-commutative geometry in the setting of operator systems \cite{walter:GH-convergence, walter-connes:truncations, CvS:Truncations, GvS:TolFin},  in which a new variation appears: instead of measuring the quantum Gromov-Hausdorff distance between the quantum metric spaces in question, the convergence results in   \cite{walter:GH-convergence} are formulated by applying the classical Gromov-Hausdorff distance to the associated state spaces. 
This emphasises the question whether this is only a minor cosmetic  difference or if these two quantum metrics are actually in-equivalent, and the aim of the present note is to show that the latter  situation is the case.  \\

To be more precise, if $(X,L_X)$  is a compact quantum metric space (see Section \ref{sec:qcms} for definitions), the associated state space $\C S(X)$ naturally becomes a compact metric space for the Connes metric
$
d_{L_X}(\mu, \nu):=\left\{ |\mu(x)-\nu(x)| : L_X(x)\leq 1   \right\}.
$  
Given two compact quantum metric spaces, $(X,L_X)$ and $(Y,L_Y)$, one may therefore either consider their quantum Gromov-Hausdorff distance $\dist_{\text{GH}}^{\text{Q}}((X,L_X); (X,L_Y))$ (see \eqref{eq:qgh-dist}), or the related notion
\[
\dist_{\text{GH}}((X,L_X); (X,L_Y)):= \dist_{\text{GH}}\left((\C S(X), d_{L_X}); (\C S(Y), d_{L_Y})  \right),
\] 
where the right hand side is the classical Gromov-Hausdorff distance \cite{Evans:Prob} applied to the state spaces. 
Formally speaking, $\distqgh$ and $\distgh$, as defined above, are only metrics on the space of isometry classes of compact quantum metric spaces \cite[Theorem 7.8]{Rie:GHD}, just like the ordinary Gromov-Hausdorff distance is a metric on the space of isometry classes of compact metric spaces. As is customary, we shall suppress the difference between a compact quantum metric space and its isometry class in the sequel. The main result of this paper is that these two notions of quantum Gromov-Hausdorff distance are not equivalent:
\begin{theoremletter}\label{thm:mainthm}
The two metrics $\distgh$ and $\distqgh$  are not equivalent. 
\end{theoremletter}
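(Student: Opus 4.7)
The plan is to establish Theorem~\ref{thm:mainthm} by exhibiting a sequence of pairs of compact quantum metric spaces $(A_n, L_n^A), (B_n, L_n^B)$ along which $\distgh((A_n, L_n^A), (B_n, L_n^B)) \to 0$ while $\distqgh((A_n, L_n^A), (B_n, L_n^B))$ stays bounded below by a positive constant. The inequality $\distgh \leq \distqgh$ comes for free: given an admissible Lip-norm $L$ on $A \oplus B$ restricting to $L^A$ and $L^B$, the natural maps $(\C S(A), d_{L^A}) \hookrightarrow (\C S(A \oplus B), d_L)$ and analogously for $B$ are isometric embeddings, so the ambient Hausdorff distance that $\distqgh$ infimises already dominates $\distgh$, and the content of the theorem is that no reverse inequality can hold uniformly.

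A convenient place to search for the separating sequence is the commutative setting. For a compact metric space $(X, d)$ and the associated cqms $(C(X), L_d)$ built from the Lipschitz seminorm, the Kantorovich--Rubinstein duality identifies the Connes metric on $\C S(C(X))$ with the $W_1$-Wasserstein metric on the space $\C P(X)$ of Borel probability measures, while Rieffel's analysis of his quantum distance on classical compact metric spaces supplies the identity $\distqgh((C(X), L_d), (C(Y), L_{d'})) = \distgh((X, d), (Y, d'))$. Combining these two reductions, Theorem~\ref{thm:mainthm} would follow from the statement that, on the moduli space of isometry classes of compact metric spaces, the Gromov--Hausdorff distance $\distgh(X, Y)$ is not equivalent to the Gromov--Hausdorff distance $\distgh(\C P(X), \C P(Y))$ of the associated $W_1$-Wasserstein spaces. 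Concretely, one needs to produce compact metric spaces $(X_n, d_n), (Y_n, d_n')$ with $\distgh(\C P(X_n), \C P(Y_n)) \to 0$ but $\distgh(X_n, Y_n) \geq c > 0$.

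The heuristic guiding such a construction is that the functor $X \mapsto \C P(X)$ is a convexification: $X$ sits inside $\C P(X)$ as its extreme Dirac measures, but the extreme-point structure of a compact metric space is not preserved in a quantitative way by Gromov--Hausdorff perturbations. This opens the door to pairs of metric spaces of markedly different combinatorial or dimensional character whose Wasserstein hulls become arbitrarily close while $(X_n, d_n)$ and $(Y_n, d_n')$ themselves remain at a fixed positive Gromov--Hausdorff distance. The hardest step will be isolating such a pair and verifying the lower bound $\distgh(X_n, Y_n) \geq c$, which requires a metric-space invariant that is visible in $X$ but erased under the passage to probability measures; once a clean candidate is fixed, the complementary upper bound on the Wasserstein side is an explicit transport-coupling computation. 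If the commutative route does not yield a workable example, the fall-back strategy is to use genuinely non-commutative $A_n, B_n$, where the additional algebraic rigidity captured by $\distqgh$ but invisible to the Connes state-space metric supplies the required separation directly.
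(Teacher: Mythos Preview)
Your central reduction step is not justified: the identity $\distqgh((C(X), L_d),(C(Y), L_{d'})) = \distgh((X,d),(Y,d'))$ is \emph{not} something Rieffel established. What is known in the commutative setting is only the inequality $\distqgh \leq \distgh$ (obtained by taking the Lipschitz seminorm of a metric on $X \sqcup Y$ as an admissible Lip-norm), and Rieffel explicitly leaves the reverse inequality open in \cite{Rie:GHD}. Unfortunately it is precisely that reverse direction your argument requires: you want $\distgh(X_n,Y_n) \geq c$ to force $\distqgh(C(X_n),C(Y_n)) \geq c$. Without it, producing compact metric spaces $X_n,Y_n$ with $\distgh(\C P(X_n),\C P(Y_n)) \to 0$ but $\distgh(X_n,Y_n) \geq c$ does not yield the theorem. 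Your proposal also stops short of actually constructing such $X_n,Y_n$; the heuristic that extreme-point structure is ``erased'' under convexification is suggestive but is not a proof, and no candidate invariant is named.

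The paper sidesteps this difficulty by \emph{not} working with full function algebras $C(X)$ at all. Instead it invokes Li's result \cite[Proposition~3.1]{Li:GH-dist} that any compact convex subset $K$ of a real normed space is affinely isometric to the state space of a compact quantum metric space, namely the operator system of affine Lipschitz functions on $K$. Taking $K = K_p^N$, the closed unit ball of $\ell_p^N$, gives compact quantum metric spaces $(X_p^N,L_p^N)$ whose state spaces are \emph{exactly} $(K_p^N, d_p^N)$, with no passage to probability measures. The upper bound $\distgh(X_p^N,X_1^N) \leq 2^p-2$ then comes from the (non-affine) Mazur map $K_p^N \to K_1^N$, whose distortion is controlled uniformly in $N$. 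The lower bound $\distqgh(X_p^N,X_1^N) \geq \tfrac12 - N^{1/p-1}$ exploits Li's formula \eqref{eq:Li-formula}: the embeddings there must be \emph{affine}, hence extend to linear isometries $\ell_p^N \to V$ and $\ell_1^N \to V$, and a sign-choice argument based on Clarkson's inequality shows these cannot have close images when $N$ is large relative to $p-1$. Choosing $p_n \to 1$ and $N_n$ with $N_n^{1/p_n - 1} \leq 1/4$ produces the separating sequence.
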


The strategy to prove this result  is to devise a sequence $(X_n, L_n)$ of compact quantum metric spaces which is convergent for $\distgh$ but which is bounded  away from the limit point with respect to $\distqgh$. The estimates needed in order to prove this are surprisingly subtle and involve a careful analysis of the so-called Mazur map as well as an application of Clarkson's inequalities; see Section \ref{sec:estimates} for more details. \\

\paragraph{\textbf{Acknowledgments.}}
The authors gratefully acknowledge the financial support from  the Independent Research Fund Denmark through grant no.~9040-00107B, 7014-00145B and 1026-00371B.
This note grew out of discussions with Walter van Suijlekom during the 2022 NseaG-workshop at the Lorentz center in Leiden. We would like to thank the Lorentz center for their hospitality and the nice research environment they provide.
Furthermore, the authors wish to thank Alexander Kahle, Fr\'{e}d\'{e}ric Latr\'{e}moli\`ere and Marc Rieffel  for their input in connection with the problem addressed in the present note.

\section{Compact quantum metric spaces}\label{sec:qcms}

The theory of compact quantum metric spaces has yet to reach its final form, and there are therefore a number of competing, but strongly related, definitions available \cite{Rie:MSS, Li:CQG, Lat:QGH}. We shall here use the setting of operator systems as  basis for the definitions, since this seems to provide a convenient flexibility compared with the $C^*$-algebraic setting \cite{Li:CQG, Lat:QGH}, and furthermore has the advantage of being in line with some of the recent developments in non-commutative geometry \cite{walter:GH-convergence, walter-connes:truncations}.  By a \emph{complete operator system} we shall mean a closed subset $X$ of a unital $C^*$-algebra $A$, which is furthermore invariant under the adjoint operation and contains the unit of $A$. The usual definition of a \emph{state} carries over to this setting, and the state space $\C S(X)$ becomes a compact topological space for the weak$^*$ topology.  
\begin{dfn}
A \emph{compact quantum metric space} is a complete operator system $X$ equipped with a semi-norm $L\colon X\to [0,\infty]$ satisfying that
\begin{enumerate}
\item $L$ is $*$-invariant and $L(1)=0$.
\item the set $\text{dom}(L):= \{x\in X : L(x)<\infty\}$ is dense in $X$.
\item the function $d_L\colon \C S(X) \times \C S (X) \to [0,\infty]$ given by
\[
d_{L}(\mu, \nu):=\sup\{ |\mu(x)-\nu(x)| : L(x)\leq 1 \}
\]
metrises the weak$^*$ topology. 
\end{enumerate}
In this case, $L$ is referred to as a \emph{Lip-norm}.
\end{dfn}

Since the purpose of the present note is somewhat specific, we shall not dwell on the basic properties of compact quantum metric spaces, but refer the reader to \cite[Section 2]{KK:quantumSU2}
for a comparison between the definition above and Rieffel's original definition \cite{Rie:MSS}, and the references therein for a number of important examples. 
In analogy with the classical Gromov-Hausdorff distance  \cite{gromov-groups-of-polynomial-growth-and-expanding-maps, edwards-GH-paper}, Rieffel developed his quantum Gromov-Hausdorff distance based on the notion of an {admissible seminorm}. More precisely, if $(X,L_X)$ and $(Y,L_Y)$ are compact quantum metric spaces a Lip-norm $L\colon X\oplus Y \to [0,\infty]$ is called \emph{admissible} if $\dom(L)=\dom(L_X)\oplus \dom(L_Y)$ and if the quotient semi-norm induced by $L$ on the self-adjoint parts $X_{\sa}$ and $Y_{\sa}$ agree with (the restrictions of) $L_X$ and $L_Y$, respectively. 
Given such an admissible seminorm $L$, the coordinate projections dualise to isometric embeddings
\[
(\C S(X), d_{L_X}) \overset{\iota_X}{\longhookrightarrow}  (\C S(X\oplus Y), d_L) \overset{\iota_Y}{\longhookleftarrow} (\C S(Y), d_{L_Y})
\]
and it therefore makes sense to define 
\begin{align}\label{eq:qgh-dist}
\distqgh((X,L_X);(Y,L_Y)):=\inf\big\{ \disth\big( \iota_X(\C S(X)), \iota_Y(\C S(Y))    \big) : L \text{ admissible} \big\}
\end{align}
where $\disth$ is the Hausdorff distance  \cite{Hausdorff-grundzuge} measured with respect to $d_L$.
As demonstrated in \cite[Lemma 2.2]{KK:quantumSU2}, the above quantum Gromov-Hausdorff distance agrees with Rieffel's original definition \cite{Rie:GHD} applied to the order unit spaces $\dom(L_X)_{\sa}$ and $\dom(L_Y)_{\sa}$. We may therefore also apply Li's alternative description from \cite[Proposition 3.2]{Li:GH-dist}: 
\begin{align}\label{eq:Li-formula}
\distqgh((X,L_X);(Y,L_Y)) =\inf \big\{ \disth\big(i_X(\C S(X)), i_Y(\C S(Y)) \big)\big\}
\end{align}
where the infimum runs over all  \emph{affine} isometric embeddings $i_X\colon \C S(X) \to V$ and  $i_Y\colon \C S(Y) \to V$ into arbitrary real normed spaces $V$ and $\disth$ denotes the Hausdorff distance measured  with respect to the metric coming from the norm on $V$.
 It follows immediately from \eqref{eq:qgh-dist} that $\dist_{\text{GH}}^{\text{Q}}$ dominates  $\dist_{\text{GH}}$ and to prove Theorem \ref{thm:mainthm}, we shall therefore show that no constant $C>0$ exists for which $ \distqgh \leq C \cdot \distgh$. To do so, we have to manufacture compact quantum metric spaces with quite specific state spaces which is possible due to another result of Li  \cite[Proposition 3.1]{Li:GH-dist}.  This result ensures that any  compact convex subset $K$ of a  real normed space $V$ is affinely isometrically isomorphic  to the state space of a compact quantum metric space. In our setting, the relevant operator system $X$ is the closure of the affine Lipschitz continuous functions $\text{Lip}_{\text{Aff}}(K, \cc)$ inside the unital $C^*$-algebra $C(K, \cc)$ and the corresponding Lip-norm computes the Lipschitz constant of the functions in $X$.
%

 We are going to apply the above constructions to the unit balls in finite dimensional $\ell^p$-spaces. More precisely, for $p\geq 1$ and $N\in \nn$ we consider the compact convex subset
\[
K_p^N := \{ x\in \rr^N : \|x\|_p\leq 1\}  \subseteq \RR^N ,
\]
where we consider $\RR^N$ as a real normed space with respect to the $p$-norm, so that the metric on the closed unit ball $K_p^N$ is given by $d_p^N(x,y):=\|x-y\|_p$. We denote by $(X_p^N, L_p^N)$ the associated compact quantum metric space whose state space $(\C S(X_p^N), d_{L_p^N})$ is affinely, isometrically isomorphic to $(K_p^N, d_{p}^N)$.

  In the following section,  we show that there exist a sequence $\{p_n\}$ in $(1,2)$ with $\lim_{n\to \infty} p_n= 1$ and a sequence $\{N_n\}$ of natural numbers such that
\[
\lim_{n \to \infty} \distgh\left( (X_{p_n}^{N_n},L_{p_n}^{N_n}); (X_1^{N_n}, L_1^{N_n}) \right) = 0
\]
but for which
\[
\distqgh\left( (X_{p_n}^{N_n},L_{p_n}^{N_n}); (X_1^{N_n}, L_1^{N_n}) \right) \geq \frac{1}{4} 
\]
for all $n \in \nn$. This implies the statement of Theorem \ref{thm:mainthm}.

We remark that a similar problem was solved within the theory of Banach spaces in \cite{KO:distances-between-Banach-spaces} where the authors compare a version of the Gromov-Hausdorff distance to the so-called Kadets distance, and the constructions in the present paper draw also on the techniques from \cite{KO:distances-between-Banach-spaces}.


\section{Estimating the distances}\label{sec:estimates}
The first step towards the proof of Theorem \ref{thm:mainthm} is to obtain an estimate on the Gromov-Hausdorff distance between $X_p^N$ and $X_1^N$ which is independent of $N$. We provide such an estimate in the following proposition, whose proof is  a minor modification of the corresponding argument for infinite dimensional $\ell^p$-spaces given in \cite{KO:distances-between-Banach-spaces};  we include it here, with added details,  for the reader's convenience.

\begin{prop}\label{prop:GH-convergence}
Let $N\in \nn$ and  $p\in [1,\infty)$. It holds that 
\[
\distgh\left((X_p^N,L_p^N); (X_1^N, L_1^N)   \right)\leq  2^p-2.
\]
\end{prop}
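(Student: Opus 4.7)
The approach is to bypass the state-space/operator-system formalism entirely and work directly with the concrete compact metric spaces $(K_p^N, d_p^N)$ and $(K_1^N, d_1^N)$, since the Gromov-Hausdorff distance in the statement equals $\distgh\big((K_p^N, d_p^N), (K_1^N, d_1^N)\big)$ by construction. I would then use the correspondence characterisation of the classical Gromov-Hausdorff distance,
\[
\distgh\big((X,d_X), (Y,d_Y)\big) \leq \tfrac{1}{2} \sup\bigl\{|d_X(x,x') - d_Y(y,y')| : (x,y),(x',y') \in R\bigr\},
\]
valid for any correspondence $R \subseteq X \times Y$.

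The correspondence is the graph of the \emph{Mazur map} $M \colon K_p^N \to K_1^N$ defined coordinate-wise by $M(x)_i := \sgn(x_i)\, |x_i|^p$. Since $\|M(x)\|_1 = \sum_i |x_i|^p = \|x\|_p^p \leq 1$ whenever $x \in K_p^N$, the map is well defined, and the analogous formula with exponent $1/p$ provides an inverse, so $M$ is a bijection. The problem thus reduces to the single distortion estimate
\[
\sup_{x,x' \in K_p^N}\bigl|\,\|x-x'\|_p - \|M(x)-M(x')\|_1\,\bigr| \leq 2(2^p-2),
\]
after which the factor $\tfrac{1}{2}$ yields the claimed bound. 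This is the step I expect to be the real content of the proposition, and it is the point where the argument of \cite{KO:distances-between-Banach-spaces} is used in a finite-dimensional guise.

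For this central inequality I would split the estimate coordinate-wise according to whether $x_i$ and $x_i'$ share a sign. In the same-sign regime, the elementary convexity inequality $(t-1)^p \leq t^p - 1$ for $t \geq 1$ and $p \geq 1$ yields $|x_i^p - (x_i')^p| \geq |x_i - x_i'|^p$, hence $\|M(x)-M(x')\|_1 \geq \|x-x'\|_p^p$; combined with the fact that both quantities lie in $[0,2]$, this gives the bound by optimising $a \mapsto a^{1/p} - a$ on $[0,2]$. In the mixed-sign regime one invokes Clarkson's inequality for $p\geq 1$, in its elementary form $(a+b)^p \leq 2^{p-1}(a^p+b^p)$ for $a,b \geq 0$, to control the extra cross-terms produced by the sign flips.

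The main obstacle is the clean bookkeeping needed to combine these two regimes into a single inequality with the sharp rate $2^p-2$ (which is what forces the bound to vanish in the limit $p \to 1^+$); in particular, the Clarkson factor $2^{p-1}$ from mixed coordinates must telescope against the concavity factor from the same-sign coordinates in order to produce a quantity of the correct form. Once this universal (i.e.\ $N$-independent) pointwise estimate is established, the proposition follows immediately.
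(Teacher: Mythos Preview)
Your plan is correct and matches the paper's proof: reduce to $\distgh(K_p^N,K_1^N)$, use the distortion characterisation with the Mazur map, and bound the distortion by $2(2^p-2)$ via the elementary inequalities $(t-s)^p+s^p\leq t^p\leq 2^{p-1}((t-s)^p+s^p)$ together with $\max_{a\in[0,2]}|a^p-a|=2^p-2$. The paper organises the bookkeeping slightly differently from your sign-splitting: it inserts $\|x-x'\|_p^p$ as an intermediate term, bounding $\bigl|\,\|x-x'\|_p-\|x-x'\|_p^p\,\bigr|$ and $\bigl|\,\|x-x'\|_p^p-\|M(x)-M(x')\|_1\,\bigr|$ separately, each by $2^p-2$; the second of these follows from the single coordinate-wise inequality $\bigl||a-b|^p-|\sgn(a)|a|^p-\sgn(b)|b|^p|\bigr|\leq(2^{p-1}-1)(|a|^p+|b|^p)$, which handles both sign regimes at once and sums cleanly against $\|x\|_p^p+\|x'\|_p^p\leq 2$.
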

For the proof of Proposition \ref{prop:GH-convergence}, it is convenient to first single out the following lemma.

\begin{lemma}\label{lem:estimates}
Let $p\in [1,\infty)$. The following holds:
\begin{enumerate} 
\item The function $f\colon [0,2] \to \rr$ given by $f(x)=\left|x^p-x \right|$ attains its maximum at $x=2$.
\item For all $t\geq s\geq 0$ it holds that  $(t-s)^p + s^p \leq t^p \leq 2^{p-1}\big((t-s)^p + s^p\big)$.
\end{enumerate}
\end{lemma}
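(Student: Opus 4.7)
My approach is to handle the two parts of the lemma independently, since they are conceptually distinct: (1) is a one-variable extremisation on $[0,2]$, whereas (2) is an algebraic inequality familiar from the theory of $\ell^p$-norms.

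For part (1), I first dispose of the trivial case $p=1$, where $f\equiv 0$. For $p>1$, I would split $[0,2]$ at $x=1$, using that $x^p\leq x$ on $[0,1]$ and $x^p\geq x$ on $[1,2]$. On $[1,2]$, the function $x^p-x$ has derivative $px^{p-1}-1\geq p-1>0$, so it is strictly increasing and attains its maximum on this sub-interval at $x=2$, with value $2^p-2$. On $[0,1]$, the function $x-x^p$ vanishes at both endpoints and has a unique interior critical point $x_*=p^{-1/(p-1)}$; substituting yields $f(x_*)=\tfrac{p-1}{p}\,x_*\leq \tfrac{p-1}{p}$, since $x_*\leq 1$. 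It therefore suffices to prove the single-variable inequality $\tfrac{p-1}{p}\leq 2^p-2$ for $p\geq 1$. This is the only delicate step, and I would settle it as follows: the left-hand side is concave in $p$ (second derivative $-2/p^3<0$) with derivative $1$ at $p=1$, so its tangent line at $p=1$ gives $\tfrac{p-1}{p}\leq p-1$; the right-hand side is convex in $p$ with derivative $2\ln 2>1$ at $p=1$, so its tangent line gives $2^p-2\geq 2\ln 2\,(p-1)\geq p-1$. Combining the two yields the desired inequality.

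For part (2), I would set $a=t-s$ and $b=s$, reducing to the two inequalities $a^p+b^p\leq (a+b)^p$ and $(a+b)^p\leq 2^{p-1}(a^p+b^p)$ for $a,b\geq 0$ and $p\geq 1$. The first follows from the trivial bounds $a^p\leq a(a+b)^{p-1}$ and $b^p\leq b(a+b)^{p-1}$ obtained from the monotonicity of $x\mapsto x^{p-1}$. The second is simply the convexity of $x\mapsto x^p$ applied at $a$ and $b$ with equal weights, giving $((a+b)/2)^p\leq (a^p+b^p)/2$ and hence the claim upon multiplying by $2^p$. Neither inequality poses any real difficulty.

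The main obstacle, as indicated, is the single-variable comparison $\tfrac{p-1}{p}\leq 2^p-2$ in part~(1); everything else reduces to direct differentiation or standard $\ell^p$-type manipulations.
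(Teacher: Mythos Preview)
Your argument is correct and follows essentially the same route as the paper: both proofs reduce part~(1) to comparing the value at the interior critical point $x_*=p^{-1/(p-1)}$ with $2^p-2$ via the intermediate bound $p-1$, and both handle part~(2) by the substitution $a=t-s$, $b=s$ together with convexity of $x\mapsto x^p$ for the upper bound and an elementary argument for the lower bound. The only minor differences are presentational: you split $[0,2]$ explicitly at $1$ and supply a convexity justification for $p-1\leq 2^p-2$, while the paper lists candidate extrema and states that last inequality without proof.
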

\begin{proof}
 The two claims are trivial for $p = 1$ so we focus on the case where $p \in (1,\infty)$.
  
 We start out by proving (1). The only candidates for the maximum are at the points $0,1,2$ and the critical point $x_0 := \big( \frac{1}{p} \big)^{1/(p-1)}$. Since $f$ vanishes at $0$ and $1$ we only need to show that 
\[
x_0 - x_0^p \leq 2^p-2 .
\]
To establish this inequality, put $q := p/(p-1)$ and record that
\[
x_0 - x_0^p = \frac{1}{p^{q-1}} - \frac{1}{p^q} = \frac{p-1}{p^q} \leq p - 1 \leq 2^p - 2 . 
\]

We now turn to the proof of (2). As $p>1$, the function $g\colon [0,\infty)\to \RR$ given by $g(x)=x^p$ is convex and hence
\[
(\tfrac{1}{2}x+\tfrac{1}{2}y)^p\leq \tfrac{1}{2}x^p +\tfrac{1}{2}y^p,
\]
so that $(x+y)^p\leq 2^{p-1}(x^p+y^p)$. Furthermore, as a consequence of the triangle inequality for the $p$-norm, it holds that $x^p + y^p \leq (x + y)^p$.
The two inequalities in (2) now follow by putting $x = t - s$ and $y = s$.
\end{proof}

\begin{proof}[Proof of Proposition \ref{prop:GH-convergence}]
We shall use the characterisation of Gromov-Hausdorff distance in terms of correspondences as described, for instance, in \cite{Evans:Prob}. More precisely, \cite[Theorem 4.11]{Evans:Prob} shows that if one can find a bijective map $\varphi\colon A\to B$ between compact metric spaces $(A,d_A)$ and $(B,d_B)$ whose \emph{distortion}
\[
\text{dis}(\varphi):=\sup\{|d_A(a_1,a_2)-d_B(\varphi(a_1), \varphi(a_2))| : a_1, a_2\in A\}
\]
is bounded by $\ep$, then $\distgh((A,d_A), (B,d_B))\leq  \frac{1}{2}\ep$. In our situation, the relevant map will be the \emph{Mazur map} $\varphi\colon K_p^N \to K_1^N$ given by
\[
\varphi(x_1,\dots, x_N)=\left(\sgn(x_1)|x_1|^p, \dots, \sgn(x_N) |x_N|^p\right),
\]
and we therefore need to estimate the quantity
\[
\Big|\|x-y\|_p - \|\varphi(x)-\varphi(y)\|_1 \Big| 
\]
for $x,y\in K_p^n$. We now claim that
\[
\Big| |a-b|^p - \big| \sgn(a)|a|^p-\sgn(b)|b|^p \big|  \Big| \leq (2^{p-1}-1)(|a|^p + |b|^p) \qquad (a,b\in \rr).
\]
To see this, note first that by symmetry it suffices to treat the two cases $a\geq b\geq 0$ and $b\geq 0$ and $a<0$; in the first case the claim follows from  Lemma \ref{lem:estimates} (2) 
with $t=a$ and $s=b$, and in the second case by using $s=|a|$ and $t=b+|a|$. 
For $x,y\in K_p^N$, we therefore get
\begin{align*}
\Big|\|x-y\|_p^p - \|\varphi(x)-\varphi(y)\|_1 \Big|&\leq  \sum_{i=1}^N \Big| |x_i-y_i|^p - \big| \sgn(x_i)|x_i|^p-\sgn(y_i)|y_i|^p \big|  \Big|\\
&\leq \sum_{i=1}^N (2^{p-1}-1)(|x_i|^p +|y_i|^p)\leq 2(2^{p-1}-1)=2^p-2. 
\end{align*}
Combining this with the estimate from  Lemma \ref{lem:estimates} (1) we obtain
\begin{align*}
\Big| \|x-y\|_p - \|\varphi(x)-\varphi(y)\|_1 \Big| &\leq \Big|\|x-y\|_p - \|x-y\|_p^p \Big| + \Big|\|x-y\|_p^p - \|\varphi(x)-\varphi(y)\|_1 \Big|\\
&\leq 2^p-2 +2^p-2=2(2^p-2)
\end{align*}
Thus, the bijection $\varphi$ has distortion at most $2(2^p-2)$ and hence 
\[
\distgh((X_p^N,L_p^N); (X_1^N, L_1^N))=\distgh((K_p^N, d_p^N); (K_1^N,d_1^N))\leq  2^p-2 . \qedhere
\]
\end{proof}

Next we turn towards a lower bound on the quantum Gromov-Hausdorff distance:

\begin{prop}\label{prop:lower-bound}
 Let $N\in \nn$ and $p\in (1,2)$. It holds that 
\[
\distqgh\left((X_p^N,L_p^N); (X_1^N, L_1^N)   \right)\geq \frac12- N^{\frac{1}{p}-1}.
\]
\end{prop}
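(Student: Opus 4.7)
By Li's description \eqref{eq:Li-formula}, it suffices to show that for any affine isometric embeddings $i_p\colon K_p^N \to V$ and $i_1\colon K_1^N \to V$ into a real normed space $V$, the Hausdorff distance satisfies $\disth(i_p(K_p^N), i_1(K_1^N)) \geq \frac{1}{2} - N^{1/p - 1}$. Write this Hausdorff distance as $d$. After translating $V$ I may assume $i_1 = L_1$ is linear, and write $i_p = L_p + c_p$ with $L_p$ a linear isometric embedding of $(\rr^N, \|\cdot\|_p)$ into $V$. A useful preliminary bound on $c_p$ comes from the central symmetries of $K_p^N$ and $K_1^N$: one has $-i_p(K_p^N) = i_p(K_p^N) - 2 c_p$ while $-i_1(K_1^N) = i_1(K_1^N)$, and two applications of the triangle inequality for Hausdorff distance give $\disth(i_p(K_p^N),\, i_p(K_p^N) - 2 c_p) \leq 2 d$; pairing both sides with a norming functional for $c_p$ yields $\|c_p\|_V \leq d$.

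Next I exploit the $2N$ extreme points $\pm e_i$ of $K_1^N$. For each $i$ I pick $z_i^\pm \in K_p^N$ with $\|i_p(z_i^\pm) - i_1(\pm e_i)\|_V \leq d$ and set $v_i := \frac{1}{2}(z_i^+ - z_i^-) \in K_p^N$. Averaging the two defining inequalities eliminates the translation $c_p$ and produces $\|L_p(v_i) - L_1(e_i)\|_V \leq d$; in particular the $v_i$ are pairwise $\ell_p$-separated by at least $2 - 2d$. Passing to the means $\bar v := \frac{1}{N}\sum v_i \in K_p^N$ and $\bar e := \frac{1}{N}\sum e_i$, linearity gives $\|L_p(\bar v) - L_1(\bar e)\|_V \leq d$, and since $\|L_1(\bar e)\|_V = \|\bar e\|_1 = 1$ this forces the lower bound $\|\bar v\|_p \geq 1 - d$.

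The matching upper bound on $\|\bar v\|_p$ is extracted from Clarkson's second inequality, which holds in the relevant range $p \in (1,2)$ with conjugate exponent $q = p/(p-1)$: for any pair $v_i, v_j$ in the unit ball of $\ell_p^N$ one has $\|v_i + v_j\|_p^q + \|v_i - v_j\|_p^q \leq 2^q$, so $\|v_i + v_j\|_p \leq \bigl(2^q - (2 - 2d)^q\bigr)^{1/q}$. The identity $\sum_{i<j}(v_i + v_j) = N(N-1)\,\bar v$ combined with the triangle inequality converts these pairwise estimates into an upper bound on $\|\bar v\|_p$. Cross-referencing with the two distinguished vectors $L_p(\bar e)$ and $L_1(\bar e)$ in $V$ (whose norms are $\|\bar e\|_p = N^{1/p - 1}$ and $\|\bar e\|_1 = 1$, respectively) and using the a-priori bound $\|c_p\|_V \leq d$ to move freely between these two copies of $\bar e$ then yields the claimed inequality $d \geq \frac{1}{2} - N^{1/p - 1}$.

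The delicate point, and the main obstacle I foresee, is this final calibration. A crude pairwise application of Clarkson's inequality gives only an $N$-independent lower bound on $d$, so the real work is to detect the discrepancy between the $\ell_1^N$- and $\ell_p^N$-norms of the specific vector $\bar e = \frac{1}{N}(e_1 + \cdots + e_N)$ and thereby produce exactly the $N$-dependent quantity $N^{1/p-1}$ in the final estimate, rather than an $N$-free constant.
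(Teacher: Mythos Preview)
Your setup through step~6 is sound and close in spirit to the paper: Li's formula, the symmetry argument giving $\|c_p\|_V\leq d$, the symmetrised approximants $v_i=\tfrac12(z_i^+-z_i^-)\in K_p^N$ with $\|L_p(v_i)-L_1(e_i)\|_V\leq d$, and the consequence $\|\bar v\|_p\geq 1-d$ are all correct.

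The gap is in steps~7--9. Your pairwise Clarkson bound $\|v_i+v_j\|_p\leq\bigl(2^q-(2-2d)^q\bigr)^{1/q}$ together with $\sum_{i<j}(v_i+v_j)=N(N-1)\bar v$ gives
\[
1-d\;\leq\;\|\bar v\|_p\;\leq\;\tfrac12\bigl(2^q-(2-2d)^q\bigr)^{1/q},
\]
which unwinds to $2(2-2d)^q\leq 2^q$, i.e.\ $d\geq 1-2^{1/p-1}$. This is an $N$-independent constant strictly below $\tfrac12$, so it cannot yield $d\geq\tfrac12-N^{1/p-1}$ for large $N$. Your final ``cross-referencing'' with $L_p(\bar e)$ and $L_1(\bar e)$ does not repair this: the vectors $v_i$ are arbitrary points of $K_p^N$ with no a~priori relation to $\bar e$ inside $\ell_p^N$, so $\|\bar v-\bar e\|_p$ is uncontrolled, and the two images $L_p(\bar e)$, $L_1(\bar e)$ are linked by nothing beyond the Hausdorff bound you have already used. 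You correctly diagnose the obstacle in your last paragraph, but the proposal contains no mechanism to overcome it.

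The paper produces the $N^{1/p-1}$ term by applying Clarkson \emph{iteratively} rather than pairwise: an induction shows that for any $x_1,\ldots,x_n\in K_p^N$ one can choose signs $\alpha_j\in\{\pm1\}$ with $\bigl\|\sum_{j}\alpha_jx_j\bigr\|_p\leq n^{1/p}$. Applying this to the differences $x_j-x_0$ (where $x_j$ approximates $e_j$ and $x_0$ approximates $0$) and comparing with $\bigl\|\sum_j\alpha_je_j\bigr\|_1=N$ gives $N\leq 2N\delta+2N^{1/p}$ and hence the claimed bound. The point is that averaging over pairs costs a full factor of $N$ in the triangle inequality, whereas the inductive sign choice costs only $N^{1/p}$; that exponent gap is exactly what produces $N^{1/p-1}$.
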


For the proof we need the following two lemmas, which are likely known to experts in the field but for which we were unable to find an explicit reference.

\begin{lemma}\label{lem:signs}
  Let $p \in (1,2)$ and $N \in \nn$. For every $n\in \nn$ and $x_1, \ldots, x_n\in K_p^N$ there exist $\alpha_1, \dots, \alpha_n\in \{-1,1\}$ such that  $\| \sum_{i=1}^n \alpha_ix_i\|_p\leq n^{\tfrac{1}{p}}$.
\end{lemma}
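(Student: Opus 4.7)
The strategy is to prove the averaged inequality
\[
\frac{1}{2^n} \sum_{\alpha \in \{\pm 1\}^n} \Big\| \sum_{i=1}^n \alpha_i x_i \Big\|_p^p \leq n
\]
and then conclude by the pigeonhole principle: at least one summand must lie below the average, so some $\alpha \in \{\pm 1\}^n$ satisfies $\|\sum_i \alpha_i x_i\|_p^p \leq n$, which gives the desired bound $\|\sum_i \alpha_i x_i\|_p \leq n^{1/p}$ after extracting a $p$-th root.

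I would establish the averaged inequality by induction on $n$. The base case $n = 1$ is immediate from $\|x_1\|_p \leq 1$. For the inductive step, set $S_{n-1}(\beta) := \sum_{i=1}^{n-1} \beta_i x_i$ and split the sum according to the last sign $\alpha_n$:
\[
\frac{1}{2^n}\sum_\alpha \Big\|\sum_{i=1}^n \alpha_i x_i\Big\|_p^p = \frac{1}{2^n} \sum_{\beta \in \{\pm 1\}^{n-1}} \bigl( \|S_{n-1}(\beta) + x_n\|_p^p + \|S_{n-1}(\beta) - x_n\|_p^p \bigr).
\]
The key input is the Clarkson-type parallelogram inequality $\|u+v\|_p^p + \|u-v\|_p^p \leq 2(\|u\|_p^p + \|v\|_p^p)$, valid in $\RR^N$ for $p \in (1,2]$. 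Applied $\beta$-wise and combined with the inductive hypothesis $\frac{1}{2^{n-1}}\sum_\beta \|S_{n-1}(\beta)\|_p^p \leq n-1$ and $\|x_n\|_p \leq 1$, this yields the bound $(n-1)+1 = n$, closing the induction.

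The main obstacle is justifying the Clarkson-type parallelogram inequality used above, which is a slight variant of the classical Clarkson inequalities as usually stated. Since $\|\cdot\|_p^p$ decomposes coordinate-wise, it suffices to prove the scalar case $|a+b|^p + |a-b|^p \leq 2(|a|^p + |b|^p)$ for $a, b \in \RR$ and $p \in (1,2]$. I would deduce it from the classical second Clarkson inequality
\[
\|u+v\|_p^q + \|u-v\|_p^q \leq 2 \bigl(\|u\|_p^p + \|v\|_p^p\bigr)^{q/p}, \qquad q = \tfrac{p}{p-1},
\]
together with the power-mean estimate $(A+B)^{q/p} \leq 2^{q/p - 1}(A^{q/p} + B^{q/p})$ (valid since $q/p \geq 1$), applied with $A = \|u+v\|_p^p$ and $B = \|u-v\|_p^p$; this chain gives $(A+B)^{q/p} \leq 2^{q/p}(\|u\|_p^p + \|v\|_p^p)^{q/p}$, and taking the $(p/q)$-th power yields the desired estimate on $A + B$.
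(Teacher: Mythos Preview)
Your argument is correct. Both your proof and the paper's rest on Clarkson's inequality and proceed by induction on $n$, but the organization differs. The paper runs a \emph{greedy} induction: having found signs with $\|\sum_{i\leq n}\alpha_i x_i\|_p\leq n^{1/p}$, it applies Clarkson's inequality in its $q$-power form directly to $x=\sum_{i\leq n}\alpha_i x_i$ and $y=x_{n+1}$ to see that one of $\|x\pm y\|_p$ is at most $(n+1)^{1/p}$, and picks that sign. You instead first convert Clarkson into the $p$-power parallelogram bound $\|u+v\|_p^p+\|u-v\|_p^p\leq 2(\|u\|_p^p+\|v\|_p^p)$ and use it to prove the stronger \emph{averaged} statement $2^{-n}\sum_\alpha\|\sum\alpha_i x_i\|_p^p\leq n$, concluding by pigeonhole. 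Your route yields a bit more information (a bound on the expectation over random signs, essentially a type/cotype statement) at the cost of the extra derivation of the $p$-power inequality; the paper's direct use of the $q$-form is slightly quicker but gives only the existence of one good sign pattern.
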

\begin{proof}
   The proof runs by induction on $n \in \nn$. The case where $n = 1$ is clear.
Let $n \in \nn$ and suppose that $x_1,\ldots, x_{n+1} \in K_p^N$ and that $\al_1,\ldots,\al_n \in \{-1,1\}$ are chosen such that $\| \sum_{i=1}^n \alpha_ix_i\|_p\leq n^{\tfrac{1}{p}}$.
  Put $q := \frac{p}{1-p}$ so that $1/p + 1/q = 1$. From Clarkson's inequality \cite[Theorem 2]{clarkson:uniformly-convex-spaces},
  we know that
\[
\|x+y \|_p^q + \|x- y\|_p^q \leq 2 \big(\|x\|_p^p + \|y\|_p^p \big)^{\frac{q}{p}}.
\]
for all $x,y\in \RR^N$. Hence with $x := \sum_{i = 1}^n \al_i x_i$ and $y := x_{n+1}$ we get from the induction hypothesis that
\[
\|x+y \|_p^q + \|x- y\|_p^q \leq 2 ( n + 1 )^{\frac{q}{p}} .
\]
This implies that either $\|x + y\|_p^q$ or $\|x - y \|_p^q $ is bounded by $( n + 1 )^{\frac{q}{p}}$ and hence that either $\|x + y\|_p$ or $\|x - y \|_p$ is bounded by $( n + 1 )^{\frac{1}{p}}$. 
\end{proof}

The following lemma is certainly well known, but for the sake of completeness, we present the main idea of the proof.

\begin{lemma}\label{lem:extension-lem}
Let $V$ and $W$ be  real normed spaces and denote by $\B B_1^V(0)$ the closed unit ball in $V$. Then for any affine isometric map $\varphi\colon \B B_1^V(0) \to W$ there exists a  unique isometric linear map $\Phi\colon V\to W$ such that  $\varphi(x)=\Phi(x)+ \varphi(0)$ for all $x\in \B B_1^V(0)$. 
\end{lemma}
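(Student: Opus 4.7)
The plan is to reduce to the base-point-preserving case and then extend by scaling. First I would replace $\varphi$ by $\psi \colon \B B_1^V(0) \to W$ given by $\psi(x) := \varphi(x) - \varphi(0)$. Then $\psi$ is still affine and isometric, and satisfies $\psi(0)=0$, so it suffices to construct a unique isometric linear map $\Phi \colon V \to W$ that agrees with $\psi$ on $\B B_1^V(0)$; uniqueness is immediate since a linear map is determined by its restriction to the unit ball.

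To define $\Phi$ on all of $V$, I would set
\[
\Phi(x) := t \cdot \psi(x/t) \qquad \text{for any } t>0 \text{ with } t\geq \|x\|_V,
\]
and $\Phi(0):=0$. The first thing to verify is well-definedness: if $0<s\leq t$ with $s\geq \|x\|_V$, then $x/t = (s/t)(x/s) + (1-s/t)\cdot 0$ is a convex combination of two points in $\B B_1^V(0)$, so affineness of $\psi$ together with $\psi(0)=0$ gives $t\psi(x/t)=s\psi(x/s)$.

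Next I would check linearity. Positive homogeneity follows by rescaling the parameter $t$. To obtain $\psi(-y)=-\psi(y)$ for $y\in \B B_1^V(0)$, apply affineness to $0=\tfrac12 y + \tfrac12(-y)$; this yields $\Phi(-x)=-\Phi(x)$ and hence homogeneity under arbitrary real scalars. For additivity I would choose $t\geq \|x\|_V+\|y\|_V$, so that $x/t$, $y/t$, and $(x+y)/(2t)=\tfrac12(x/t)+\tfrac12(y/t)$ all lie in $\B B_1^V(0)$; affineness then gives $\psi\bigl((x+y)/(2t)\bigr)=\tfrac12\psi(x/t)+\tfrac12\psi(y/t)$, and multiplying through by $2t$ yields $\Phi(x+y)=\Phi(x)+\Phi(y)$.

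Finally, the isometry property is automatic from the definition: for $x\neq 0$, pick $t\geq \|x\|_V$ and compute
\[
\|\Phi(x)\|_W = t\,\|\psi(x/t)-\psi(0)\|_W = t\,\|x/t\|_V = \|x\|_V.
\]
The only mild subtlety is arranging the convex combinations so that all points involved genuinely lie inside $\B B_1^V(0)$ (rather than merely in $V$), which is why the factor $1/2$ and the choice $t \geq \|x\|_V+\|y\|_V$ appear in the additivity step; once this bookkeeping is handled, the rest is formal.
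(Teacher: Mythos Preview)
Your proof is correct and follows essentially the same approach as the paper: define $\psi=\varphi-\varphi(0)$ and extend by scaling, the paper taking $\Phi(x)=\|x\|\,\psi(x/\|x\|)$ while you use the equivalent but slightly more flexible $\Phi(x)=t\,\psi(x/t)$ for any $t\geq\|x\|$. The paper leaves the verification of linearity and the isometry property to the reader, and your argument supplies exactly those details.
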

\begin{proof}
Define the affine isometric map $\psi \colon \B B_1^V(0) \to W$ by putting $\psi(x) := \varphi(x) - \varphi(0)$. The relevant map $\Phi \colon V \to W$ is then defined by $\Phi(x) := \|x\| \cd \psi\big(\frac{1}{\|x\|}x\big)$ for $x\neq 0$ and $\Phi(0) := 0$. We leave it to the reader to verify that $\Phi$ has the desired properties.
\end{proof}

%
%
%

\begin{proof}[Proof of Proposition \ref{prop:lower-bound}]
  We use Li's formula \eqref{eq:Li-formula} to estimate the quantum Gromov-Hausdorff distance. Let therefore $V$ be any  real normed space and let  $\varphi \colon K_p^N \to V$ and $\psi \colon K_1^N \to V$ be affine isometric maps. Put $\delta:= \disth\left( \varphi(K_p^N), \psi(K_1^N) \right)$, where the Hausdorff distance is measured with respect to the metric induced by the norm $\|\cdot\|_V$ on $V$.  Upon applying a global translation if necessary, we may assume that $\varphi(0)=0$.  Denote by $e_1, \dots, e_N\in \rr^N$ the standard basis and choose $x_1,\dots, x_N\in K_p^N$ such that $\|\varphi(x_i)-\psi(e_i)\|_V\leq \delta$ for all $i\in \{1, \dots, n\}$. Choose, moreover, $x_0\in K_p^N$ such that $\| \varphi(x_0) - \psi(0) \|_V \leq \de $.

 We apply the notation $\ell^p_N$ and $\ell^1_N$ for $\B R^N$ equipped with the $p$-norm and the $1$-norm, respectively. By Lemma \ref{lem:extension-lem}, we know that $\varphi$ and $\psi_0 := \psi - \psi(0)$ extend to isometric linear maps
\[
\varphi \colon \ell^p_N \to V \q \T{and} \q \psi_0 \colon \ell^1_N \to V .
\]
Furthermore, by Lemma \ref{lem:signs} we may choose signs $\al_1, \ldots, \al_N \in \{-1,1\}$ such that
\[
\Big\| \sum_{j = 1}^N \al_j (x_j - x_0) \Big\|_p \leq 2 \cd N^{\frac{1}{p}} .
\]
We now estimate as follows:
\[
\begin{split}
N & = \Big\| \sum_{j = 1}^N \al_j e_j \Big\|_1 = \Big\| \sum_{j = 1}^N \al_j \psi_0(e_j) \Big\|_V \\
& \leq \Big\| \sum_{j = 1}^N \al_j \psi_0(e_j) - \sum_{j = 1}^N \al_j \varphi(x_j) + \sum_{j = 1}^N \al_j \psi(0)  \Big\|_V + \Big\| \sum_{j = 1}^N \al_j \big( \varphi(x_0) - \psi(0) \big) \Big\|_V \\
& \hspace{0.4cm} + \Big\| \sum_{j = 1}^N \al_j \big( \varphi(x_j) - \varphi(x_0) \big) \Big\|_V \\
& \leq \sum_{j = 1}^N \big\| \psi(e_j) - \varphi(x_j) \big\|_V + N \cd \big\| \varphi(x_0) - \psi(0) \big\|_V + \Big\|  \sum_{j = 1}^N \al_j \big( \varphi(x_j) - \varphi(x_0) \big) \Big\|_V \\
& \leq 2N \de + 2 N^{1/p} .
\end{split}
\]
This establishes the desired inequality
$
\frac{1}{2} - N^{\frac{1}{p} - 1} \leq \de 
$
and the proof is complete.
\end{proof}

With the above results at our disposal, Theorem \ref{thm:mainthm} now follows easily:

\begin{proof}[Proof of Theorem \ref{thm:mainthm}]
  Let $\{p_n\}$ be a sequence in $(1,2)$ with $\lim_{n \to \infty} p_n = 1$. For each $n \in \nn$, choose $N_n \in \nn$ such that $N_n^{\tfrac{1}{p_n}-1} \leq 1/4$. By Proposition \ref{prop:lower-bound} we then get that
  \[
\distqgh\left( (X_{p_n}^{N_n}, L_{p_n}^{X_n} ); (X_{1}^{N_n}, L_{1}^{X_n} ) \right)\geq \frac{1}{4} \q \T{for all } n \in \nn .
\]
On the other hand, by Proposition \ref{prop:GH-convergence}, we have that
\[
\distgh\left((X_{p_n}^{N_n}, L_{p_n}^{N_n}); (X_1^{N_n}, L_1^{N_n}) \right) \leq 2^{p_n} - 2 \q \T{for all } n \in \nn 
\]
and hence that
\[
\lim_{n \to \infty} \distgh\left((X_{p_n}^{N_n}, L_{p_n}^{N_n}); (X_1^{N_n}, L_1^{N_n}) \right) = 0 . \qedhere
\]
\end{proof}

\begin{remark}
Theorem \ref{thm:mainthm} only shows that $\distgh$ and $\distqgh$ are inequivalent metrics, but does not exclude the fact that they could induce the same topology on the space of (isometry classes) of compact quantum metric spaces. Although we believe this not to be the case, the techniques above seem not to be adaptable to prove such a more general result. 
\end{remark}

\begin{remark}
Despite the difference between $\distqgh$ and $\distgh$ demonstrated above, van Suijlekom's main convergence result \cite[Theorem 5]{walter:GH-convergence} holds true also when $\distgh$ is replaced by $\distqgh$, as can be seen, for instance, by an application of \cite[Proposition 2.14]{KK:quantumSU2}.
\end{remark}

\bibliographystyle{plain}

\end{document}